\tikzstyle{decision} = [diamond, draw, fill=blue!20, 
\tikzstyle{block} = [rectangle, draw, fill=blue!20, 
\tikzstyle{line} = [draw, -latex']
\tikzstyle{cloud} = [draw, ellipse,fill=red!20, node distance=3cm,
\newcommand{\rt}{\mathrm{T}}
\tikzset{main node/.style={circle,fill=blue!20,draw,minimum size=1cm,inner sep=0pt},  }
\begin{document}
\title{Transport information Hessian distances}
\author[Li]{Wuchen Li}
\email{wuchen@mailbox.sc.edu}
\address{Department of Mathematics, University of South Carolina, Columbia, 29208.}

\keywords{Hessian distance; Optimal transport; Information geometry.} 
\begin{abstract}
We formulate closed-form Hessian distances of information entropies in one-dimensional probability density space embedded with the $L^2$-Wasserstein metric. 
\end{abstract}
\maketitle
\section{Introduction}
Hessian distances of information entropies in probability density space play crucial roles in information theory with applications in signal and image processing, inverse problems, and AI \cite{IG, CY, CoverThomas1991_elements}. 
One important example is the Hellinger distance, known as a Hessian distance of negative Boltzmann-Shannon entropy in $L^2$ (Euclidean) space. Nowadays, the Hellinger distance has shown various useful properties in AI inference problems. 

Recently, optimal transport distance, a.k.a. Wasserstein distance\footnote{There are various generalizations of optimal transport distances using different ground costs defined on a sample space. For the simplicity of discussion, we focus on the $L^2$-Wasserstein distance.}, provides the other type of distance functions in probability density space \cite{AGS, Villani2009_optimal}. Unlike the $L^2$ distance, the Wasserstein distance compares probability densities by their pushforward mapping functions. More importantly, it introduces a metric space under which the information entropies present convexity properties in term of mapping functions. These properties nowadays have been shown useful in fluid dynamics, inverse problems and AI.

Nowadays, it reveals that the optimal transport distance itself is a Hessian distance of a second moment functional in Wasserstein space. Natural questions arise. {\em Can we construct Hessian distances of information entropies in Wasserstein space? 
And what is the Hessian distance of negative Boltzmann-Shannon entropy in Wasserstein space?}

In this paper, we derive closed-form Hessian distances of information entropies in Wasserstein space supported on a one dimensional sample space. In details, given a compact set $\Omega\subset\mathbb{R}^1$, consider a (negative) $f$-entropy by
\begin{equation*}
\mathcal{F}(p)=\int_\Omega f(p(x))dx,
\end{equation*}
where $f\colon \mathbb{R}\rightarrow\mathbb{R}$ is a second differentiable convex function and $p$ is a given probability density function. We show that the Hessian distance of $f$-entropy in Wasserstein space between probability density functions $p$ and $q$ satisfies 
\begin{equation*}
\mathrm{Dist}_{\mathrm{H}}(p,q)=\sqrt{\int_0^1\|h(\nabla_yF_p^{-1}(y))- h(\nabla_yF_q^{-1}(y))\|^2dy},
\end{equation*}
where $h(y)=\int_1^y\sqrt{f''(\frac{1}{z})}\frac{1}{z^{\frac{3}{2}}}dz$, $F_p$, $F_q$ are cumulative density functions (CDFs) of $p$, $q$ respectively, and $F^{-1}_p$, $F^{-1}_q$ are their inverse CDFs.  
We call $\mathrm{Dist}_{\mathrm{H}}(p,q)$ {\em transport information Hessian distances}. Shortly, we show that the proposed distances are constructed by the Jacobi operators of mapping functions between density functions $p$ and $q$. 

This paper is organized as follows. In section \ref{sec2}, we briefly review the Wasserstein space and its Hessian operators for information entropies. In section \ref{sec3}, we derive closed-form solutions of Hessian distances in Wasserstein space. Several analytical examples are presented in section \ref{sec4}. 
\section{Review of Transport information Hessian metric}\label{sec2}
In this section, we briefly review the Wasserstein space and its induced Hessian metrics for information entropies. 
\subsection{Wasserstein space}
We recall the definition of a one dimensional Wasserstein distance \cite{AGS}. 
Denote a spatial domain by $\Omega=[0,1]\subset\mathbb{R}^1$. Consider the space of smooth probability densities by 
\begin{equation*}
\mathcal{P}(\Omega)=\Big\{p(x)\in C^{\infty}(\Omega)\colon \int_\Omega p(x)dx=1,\quad p(x)\geq 0\Big\}.
\end{equation*}
Given any two probability densities $p$, $q\in\mathcal{P}(\Omega)$, the squared Wasserstein distance in $\Omega$ is defined by 
\begin{equation*}
\begin{split}
\textrm{Dist}_{\rt}(p,q)^2=&\int_\Omega\|T(x)-x\|^2q(x)dx,
\end{split}
\end{equation*}
where $\|\cdot\|$ represents a Euclidean norm and $T$ is a monotone mapping function such that $T(x)_\#q(x)=p(x)$, i.e. 
\begin{equation*}
p(T(x))\nabla_xT(x)=q(x).
\end{equation*}
Since $\Omega\subset \mathbb{R}^1$, then the mapping function $T$ can be solved analytically. Concretely, 
\begin{equation*}
T(x)=F_p^{-1}(F_q(x)),
\end{equation*}
where $F^{-1}_p$, $F^{-1}_q$ are inverse CDFs of probability densities $p$, $q$, respectively. Equivalently,
\begin{equation*}
\begin{split}
\textrm{Dist}_{\rt}(p,q)^2=&\int_\Omega \|F^{-1}_p(F_q(x))-x\|^2q(x)dx\\
=&\int_0^1\|F^{-1}_p(y)-F^{-1}_q(y)\|^2dy,
\end{split}
\end{equation*}
 where we apply the change of variable $y=F_q(x)\in [0,1]$ in the second equality. 
 
 There is a metric formulation for the $L^2$-Wasserstein distance. Denote the tangent space of $\mathcal{P}(\Omega)$ at a probability density $p$ by 
\begin{equation*}
T_p\mathcal{P}(\Omega)=\Big\{\sigma \in C^{\infty}(\Omega)\colon \int_\Omega \sigma(x) dx=0 \Big\}.
\end{equation*}
And the $L^2$-Wasserstein metric refers to the following bilinear form:
\begin{equation*}
g_{\mathrm{T}}(p)(\sigma, \sigma)=\int_\Omega (\nabla_x\Phi(x), \nabla_x\Phi(x))p(x)dx,
\end{equation*}
where $\Phi, \sigma \in C^{\infty}(\Omega)$ satisfy an elliptical equation 
\begin{equation}\label{ee}
\sigma(x)=-\nabla_x\cdot(p(x)\nabla_x\Phi(x)),
\end{equation}
with either Neumann or periodic boundary conditions on $\Omega$. Here, the above mentioned boundary conditions ensure the fact that $\sigma$ stays in the tangent space of probability density space, i.e. $\int_\Omega \sigma(x)dx=0$. As a known fact, the Wasserstein metric can be derived from a Taylor expansion of the Wasserstein distance. I.e.,
\begin{equation*}
\mathrm{Dist}_\mathrm{T}(p, p+\sigma)^2=g_{\mathrm{T}}(\sigma, \sigma)+o(\|\sigma\|_{L^2}^2), 
\end{equation*}
where $\sigma\in T_p\mathcal{P}(\Omega)$ and $\|\cdot\|_{L_2}$ represents the $L^2$ norm.  In literature, $(\mathcal{P}(\Omega), g_\rt)$ is often called the {\em Wasserstein space}.   
\subsection{Hessian metrics in Wasserstein space}
We next review the Hessian operator of $f$-entropies in Wasserstein space \cite{Villani2009_optimal}; see a derivation in appendix. Consider a (negative) $f$-entropy by 
\begin{equation*}
 \mathcal{F}(p)=\int_\Omega f(p(x))dx,
 \end{equation*} 
 where $f\colon \mathbb{R}\rightarrow\mathbb{R}$ is a one dimensional second order differentiable convex function. The Hessian operator of $f$-entropy in Wasserstein space is a bilinear form satisfying 
\begin{equation*}
\mathrm{Hess}_{\mathrm{T}}\mathcal{F}(p)(\sigma, \sigma)=\int_\Omega \|\nabla^2_x\Phi(x)\|^2f''(p(x)) p(x)^2dx.
\end{equation*}
where $f''$ represents the second derivative of function $f$ and $(\Phi, \sigma)$ satisfies an elliptic equation \eqref{ee}. 

In this paper, we seek closed-form solutions for transport Hessian distances of $\mathcal{F}$. It is to solve an action functional in $(\mathcal{P}(\Omega), \mathrm{Hess}_{\mathrm{T}}\mathcal{F})$. 
\begin{definition}[Transport information Hessian distance \cite{LiHess}]
 Define a distance function $\mathrm{Dist}_{\mathrm{H}}$ $\colon \mathcal{P}(\Omega)\times\mathcal{P}(\Omega)\rightarrow\mathbb{R}$ by
\begin{equation}\label{HC}
\mathrm{Dist}_{\mathrm{H}}(p,q)^2=\inf_{p\colon [0,1]\times\Omega\rightarrow\mathbb{R}}\Big\{\int_0^1\mathrm{Hess}_{\mathrm{T}}\mathcal{F}(p)(\partial_t p, \partial_t p)dt\colon p(0,x)=q(x),~p(1,x)=p(x)\Big\}.
\end{equation}
Here the infimum is taken among all smooth density paths $p\colon [0, 1]\times \Omega\rightarrow\mathbb{R}$, which connects both initial and terminal time probability density functions $q$, $p\in \mathcal{P}(\Omega)$. 
\end{definition}
From now on, we call $\mathrm{Dist}_{\mathrm{H}}$ the {\em transport information Hessian distance}. Here the terminology of ``transport'' corresponds to the application of Wasserstein space, while the name of ``information'' refers to the usage of $f$-entropies. 

\section{Transport information Hessian distances}\label{sec3}
In this section, we present the main result of this paper. 
\subsection{Formulations}
We first derive closed-form solutions for transport information Hessian distances defined by \eqref{HC}. 
\begin{theorem}
Denote a one dimensional function $h\colon \Omega\rightarrow\mathbb{R}$ by
\begin{equation*}
h(y)=\int_1^y\sqrt{f''(\frac{1}{z})}\frac{1}{z^{\frac{3}{2}}}dz.
\end{equation*}
Then the squared transport Hessian distance of $f$-entropy has the following formulations. 
\begin{itemize}
\item[(i)] Inverse CDF formulation: 
\begin{equation*}
\begin{split}
\mathrm{Dist}_{\mathrm{H}}(p, q)^2=&\int_0^1 \|h({\nabla_yF_p^{-1}(y)})-h({\nabla_yF_q^{-1}(y)})\|^2dy.
\end{split}
\end{equation*}
\item[(ii)] Mapping formulation: \begin{equation*}
\begin{split}
\mathrm{Dist}_{\mathrm{H}}(p, q)^2=&\int_\Omega \|h(\frac{\nabla_xT(x)}{q(x)})-h(\frac{1}{q(x)})\|^2q(x)dx,
\end{split}
\end{equation*}
where $T$ is a mapping function, such that $T_\#q =p$ and $T(x)=F_p^{-1}(F_q(x))$. Equivalently,  
\begin{equation*}
\begin{split}
\mathrm{Dist}_{\mathrm{H}}(p, q)^2=&\int_\Omega \|h(\frac{\nabla_xT^{-1}(x)}{p(x)})-h(\frac{1}{p(x)})\|^2p(x)dx,
\end{split}
\end{equation*}
where $T^{-1}$ is the inverse function of mapping function $T$, such that $(T^{-1})_\#p =q$ and $T^{-1}(x)=F_q^{-1}(F_p(x))$.
\end{itemize}
\end{theorem}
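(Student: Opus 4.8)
The plan is to pass to Lagrangian (quantile) coordinates, in which the Hessian action \eqref{HC} decouples into a one-parameter family of scalar geodesic problems, each solvable in closed form through the function $h$. First I would encode a density path $p(t,\cdot)$ by its inverse CDF $X(t,y):=F_{p(t)}^{-1}(y)$, $y\in[0,1]$, and write $J(t,y):=\nabla_yX(t,y)$ for the associated Jacobian. Differentiating the identity $F_{p(t)}(X(t,y))=y$ in $y$ gives $p(t,X(t,y))=1/J(t,y)$, so the density is recorded entirely by $J$, with $J(0,y)=\nabla_yF_q^{-1}(y)$ and $J(1,y)=\nabla_yF_p^{-1}(y)$ coming from the endpoints $p(0,\cdot)=q$, $p(1,\cdot)=p$. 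Using the elliptic relation \eqref{ee} together with a no-flux boundary condition and differentiating the CDF identity in $t$, I would obtain the flow equation $\partial_tX(t,y)=\nabla_x\Phi(t,X(t,y))$; that is, $\nabla_x\Phi$ is the material velocity of the map $X$.

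Next I would rewrite the Hessian quadratic form in terms of $J$. Differentiating the flow equation in $y$ yields $\nabla_x^2\Phi=\partial_tJ/J$, and combined with $p=1/J$ and $dx=J\,dy$ the integrand $\|\nabla_x^2\Phi\|^2f''(p)p^2\,dx$ transforms into $\tfrac{(\partial_tJ)^2}{J^3}f''(1/J)\,dy$. Hence the action \eqref{HC} becomes
\[
\int_0^1\!\!\int_0^1\frac{(\partial_tJ(t,y))^2}{J(t,y)^3}\,f''\!\Big(\frac{1}{J(t,y)}\Big)\,dy\,dt .
\]
The key observation is that this functional contains no spatial coupling between distinct values of $y$, so that the infimum may be computed pointwise in $y$.

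For each fixed $y$ I would then solve the scalar problem of minimizing $\int_0^1 g(J)(\partial_tJ)^2\,dt$, with $g(J)=f''(1/J)/J^3$, over curves $J(t)$ joining $\nabla_yF_q^{-1}(y)$ to $\nabla_yF_p^{-1}(y)$. This is a one-dimensional Riemannian length problem: by Cauchy--Schwarz together with reparametrization at constant speed, its minimum equals $\big(\int\sqrt{g(J)}\,dJ\big)^2$ taken over the endpoint interval. Since $\sqrt{g(J)}=\sqrt{f''(1/J)}\,J^{-3/2}=h'(J)$ by the definition of $h$, this minimum is exactly $\big(h(\nabla_yF_p^{-1}(y))-h(\nabla_yF_q^{-1}(y))\big)^2$, and integrating in $y$ produces formulation (i). Formulation (ii) then follows by the change of variables $y=F_q(x)$ (respectively $y=F_p(x)$), under which $\nabla_yF_q^{-1}(y)=1/q(x)$ and $\nabla_yF_p^{-1}(y)=\nabla_xT(x)/q(x)$, with $dy=q(x)\,dx$.

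The step I expect to be the main obstacle is the passage to Lagrangian coordinates and the identity $\nabla_x^2\Phi=\partial_tJ/J$: this requires justifying the change of variables, confirming that the boundary terms arising from \eqref{ee} vanish, and checking that admissible paths keep $J>0$---equivalently, that the transport map stays monotone and the densities stay positive---so that each scalar problem is genuinely a Riemannian geodesic. Once this is in place, the decoupling in $y$ and the explicit solution of each scalar geodesic are routine.
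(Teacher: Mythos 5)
Your proposal is correct and follows essentially the same route as the paper: both pass to inverse-CDF (Lagrangian) coordinates, rewrite the Hessian action as $\int_0^1\int_0^1\|\partial_t h(\nabla_yF_{p_t}^{-1}(y))\|^2\,dy\,dt$ using $h'(z)=\sqrt{f''(1/z)}\,z^{-3/2}$, conclude that linear interpolation of $h$ along the path is optimal (the paper via the Euler--Lagrange equation $\partial_{tt}h(\nabla_yF_{p_t}^{-1}(y))=0$, you via pointwise Cauchy--Schwarz, which in fact certifies minimality rather than mere criticality), and obtain formulation (ii) by the same change of variables $y=F_q(x)$. The positivity/admissibility caveat you flag at the end is likewise left implicit in the paper's own argument, so it does not distinguish the two proofs.
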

\begin{proof}
We first derive formulation (i). To do so, we consider the following two set of change of variables. 

Firstly, denote $p^0(x)=q(x)$ and $p^1(x)=p(x)$. Denote the variational problem \eqref{HC} by 
\begin{equation*}
\begin{split}
\mathrm{Dist}_{\mathrm{H}}(p^0,p^1)^2=\inf_{\Phi,p\colon[0,1]\times \Omega\rightarrow\mathbb{R}}&\Big\{\int_0^1\int_\Omega\| \nabla^2_y\Phi(t,y)\|^2f''(p(t,y))p(t,y)^2dydt\colon \\
&\hspace{1cm}\partial_tp(t,y)+\nabla\cdot(p(t,y) \nabla_y\Phi(t,y))=0,~\textrm{fixed $p^0$, $p^1$}\Big\},
\end{split}
\end{equation*}
where the infimum is among all smooth density paths $p\colon [0, 1]\times\Omega\rightarrow\mathbb{R}$ satisfying the continuity equation with gradient drift vector fields $\nabla\Phi\colon[0,1]\times \Omega\rightarrow\mathbb{R}$. 
Denote 
\begin{equation*}
y=T(t,x),
\end{equation*}
and let  
\begin{equation*}
\partial_tT(t,x)=\nabla_y\Phi(t,y).
\end{equation*}
Hence 
\begin{equation*}
\begin{split}
\mathrm{Dist}_{\mathrm{H}}(p^0,p^1)^2=&\inf_{T\colon[0,1]\times\Omega\rightarrow \Omega}\Big\{\int_0^1\int_\Omega\|\nabla_y v(t,T(t,x))\|^2f''(p(t, T(t,x)))p(t,T(t,x))^2dT(t,x)dt\colon\\
&\hspace{3cm} T(t,\cdot)_\#p(0,x)=p(t,x)\Big\},
\end{split}
\end{equation*}
where the infimum is taken among all smooth mapping functions $T\colon [0,1]\times \Omega \rightarrow \Omega$ with $T(0,x)=x$ and $T(1,x)=T(x)$. 
We observe that the above variation problem leads to 
\begin{equation}\label{variation1}
\begin{split}
&\int_0^1\int_\Omega\|\nabla_y \partial_tT(t,x)\|^2f''(p(t, T(t,x)))p(t,T(t,x))^2\nabla_xT(t,x)dxdt\\
=&\int_0^1\int_\Omega\|\nabla_x \partial_tT(t,x)\frac{dx}{dy}\|^2f''(p(t, T(t,x)))p(t,T(t,x))^2\nabla_xT(t,x)dxdt\\
=&\int_0^1\int_\Omega\|\nabla_x \partial_tT(t,x)\frac{1}{\nabla_xT(t,x)}\|^2f''(\frac{p(0, x)}{\nabla_xT(t,x)})\frac{p(0,x)^2}{\nabla_xT(t,x)}dxdt\\
=&\int_0^1\int_\Omega\|\partial_t\nabla_x T(t,x)\frac{1}{(\nabla_xT(t,x))^{3/2}}\sqrt{f''(\frac{q(x)}{\nabla_xT(t,x)})}\|^2q(x)^2dxdt.
\end{split}
\end{equation}

Secondly, denote $y=F_q(x)$, where $y\in[0,1]$. By using a chain rule for $T(t,\cdot)_\#q(x)=p_t$ with $p_t:=p(t,x)$, we have
\begin{equation*}
q(x)=\frac{dy}{dx}=\frac{1}{\frac{dx}{dy}}=\frac{1}{\nabla_yF_q^{-1}(y)},
\end{equation*}
and 
\begin{equation*}
\nabla_xT(t,x)=\nabla_xF_{p_t}^{-1}(F_q(x))=\nabla_yF_{p_t}^{-1}(y)\frac{dy}{dx}=\frac{\nabla_yF_{p_t}^{-1}(y)}{\nabla_yF_q^{-1}(y)}.
\end{equation*}
Under the above change of variables, variation problem \eqref{variation1} leads to 
\begin{equation}\label{variation}
\begin{split}
\mathrm{Dist}_{\mathrm{H}}(p^0,p^1)^2=&\inf_{\nabla_yF_{p_t}^{-1}\colon [0,1]^2\rightarrow\mathbb{R}}\Big\{\int_0^1\int_0^1\|\partial_t\nabla_yF_{p_t}^{-1}(y)\frac{1}{(\nabla_yF_{p_t}^{-1}(y))^{\frac{3}{2}}}\sqrt{f''(\frac{1}{\nabla_yF_{p_t}^{-1}(y)})}\|^2dydt\Big\}\\
=&\inf_{\nabla_yF_{p_t}^{-1}\colon [0,1]^2\rightarrow\mathbb{R}}\Big\{\int_0^1\int_0^1 \|\partial_t h(\nabla_yF_{p_t}^{-1}(y))\|^2dydt\Big\},
\end{split}
\end{equation}
where the infimum is taken among all paths $\nabla_yF_{p_t}^{-1}\colon [0,1]^2\rightarrow\mathbb{R}$ with fixed initial and terminal time conditions. Here we apply the fact that 
\begin{equation*}
\nabla_yh(y)=\sqrt{f''(\frac{1}{y})}\frac{1}{y^{\frac{3}{2}}},
\end{equation*}
and treat $\nabla_yF_{p_t}^{-1}$ as an individual variable. By using the Euler-Lagrange equation for $\nabla_yF_{p_t}^{-1}$, we show that the geodesic equation in transport Hessian metric satisfies 
\begin{equation*}
\partial_{tt}h(\nabla_yF_{p_t}^{-1}(y))=0. 
\end{equation*}
In details, we have
\begin{equation*}
h(\nabla_yF_{p_t}^{-1}(y))=th(\nabla_yF_{p}^{-1}(y))+(1-t)h(\nabla_yF_{q}^{-1}(y)),
\end{equation*}
and 
\begin{equation*}
\partial_th(\nabla_yF_{p_t}^{-1}(y))=h(\nabla_yF_{p}^{-1}(y))-h(\nabla_yF_{q}^{-1}(y)). 
\end{equation*}
Therefore, variational problem \eqref{variation} leads to the formulation (i). 

We next derive formulation (ii). Denote $y=F_q(x)$ and $T(x)=F_p^{-1}(F_q(x))$. By using formulation (i) and the change of variable formula in integration, we have 
\begin{equation*}
\begin{split}
\mathrm{Dist}_{\mathrm{H}}(p^0,p^1)^2=&\int_0^1 \|h({\nabla_yF_p^{-1}(y)})-h({\nabla_yF_q^{-1}(y)})\|^2dy\\
=&\int_\Omega\|h(\nabla_yF_p^{-1}(F_q(x))- h(\nabla_y x) \|^2dF_q(x)\\
=&\int_\Omega \|h(\frac{\nabla_xT(x)}{\frac{dy}{dx}})-h(\frac{1}{\frac{dy}{dx}})\|^2q(x)dx\\
=&\int_\Omega \|h(\frac{\nabla_xT(x)}{q(x)})-h(\frac{1}{q(x)})\|^2q(x)dx. 
\end{split}
\end{equation*}
This finishes the first part of proof. Similarly, we can derive the transport information Hessian distance in term of the inverse mapping function $T^{-1}$. 
\end{proof}
\begin{remark}
We notice that $\mathrm{Dist}_{\mathrm{H}}$ forms a class of distance functions in probability density space. Compared to the classical optimal transport distance, it emphasizes on the differences by the Jacobi operators of mapping operators. In this sense, we call the transport information Hessian distance the {\em ``Optimal Jacobi transport distance''}. 
\end{remark}
\begin{remark}
 Transport information Hessian distances share similarities with transport Bregman divergences defined in \cite{LiB}. Here we remark that transport Hessian distances are symmetric to $p$, $q$, i.e. $\mathrm{Dist}_{\mathrm{H}}(p,q)=\mathrm{Dist}_{\mathrm{H}}(q, p)$, while the 
transport Bregman divergences are often asymmetric to $p$, $q$; see examples in \cite{LiB}.  
\end{remark}

\subsection{Properties}
We next demonstrate that transport Hessian distances have several basic properties. 
\begin{proposition}
The transport Hessian distance has the following properties. 
\begin{itemize}
\item[(i)] Nonnegativity: 
\begin{equation*}
\mathrm{Dist}_{\mathrm{H}}(p,q)\geq 0.
\end{equation*}
In addition,
\begin{equation*}
\mathrm{Dist}_{\mathrm{H}}(p,q)=0\quad \textrm{iff}\quad p(x+c)=q(x), 
\end{equation*}
 where $c\in\mathbb{R}$ is a constant. 
\item[(ii)] Symmetry: 
\begin{equation*}
\mathrm{Dist}_{\mathrm{H}}(p,q)=\mathrm{Dist}_{\mathrm{H}}(q,p).
\end{equation*}
\item[(iii)] Triangle inequality: For any probability densities $p$, $q$, $r\in\mathcal{P}(\Omega)$, we have 
\begin{equation*}
\mathrm{Dist}_{\mathrm{H}}(p,r)\leq \mathrm{Dist}_{\mathrm{H}}(p,q)+\mathrm{Dist}_{\mathrm{H}}(q,r).
\end{equation*}
\item[(v)] Hessian metric: Consider a Taylor expansion by 
\begin{equation*}
\mathrm{Dist}_{\mathrm{H}}(p,p+\sigma)^2=\mathrm{Hess}_{\mathrm{T}}\mathcal{F}(p)(\sigma, \sigma)+o(\|\sigma\|_{L^2}^2),
\end{equation*}
where $\sigma\in T_p\mathcal{P}(\Omega)$.
\end{itemize}
\end{proposition}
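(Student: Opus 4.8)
The plan is to reduce every property to the inverse CDF formulation (i), which exhibits $\mathrm{Dist}_{\mathrm{H}}$ as an honest $L^2$ distance between transformed densities. Concretely, I would introduce the embedding $\Psi\colon \mathcal{P}(\Omega)\to L^2([0,1])$ defined by $\Psi(p)(y)=h(\nabla_yF_p^{-1}(y))$, so that formulation (i) reads
\[
\mathrm{Dist}_{\mathrm{H}}(p,q)=\|\Psi(p)-\Psi(q)\|_{L^2([0,1])}.
\]
Once this identification is in place, three of the four claims are inherited directly from the Hilbert-space structure of $L^2([0,1])$. Property (ii) is immediate, since $\|\Psi(p)-\Psi(q)\|^2_{L^2}=\|\Psi(q)-\Psi(p)\|^2_{L^2}$ pointwise in $y$, and property (iii) transfers verbatim from the $L^2$ triangle inequality applied to $\Psi(p)$, $\Psi(q)$, $\Psi(r)$.

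For (i), nonnegativity holds because the integrand is a squared norm; the equality case is where a little work is needed. I would argue that $\mathrm{Dist}_{\mathrm{H}}(p,q)=0$ forces $h(\nabla_yF_p^{-1}(y))=h(\nabla_yF_q^{-1}(y))$ for almost every $y$, and then invoke injectivity of $h$: since $h'(y)=\sqrt{f''(1/y)}\,y^{-3/2}>0$ on $(0,\infty)$ (using strict convexity $f''>0$ together with $\nabla_yF_p^{-1}>0$), the function $h$ is strictly increasing, hence $\nabla_yF_p^{-1}(y)=\nabla_yF_q^{-1}(y)$ a.e. Integrating yields $F_p^{-1}=F_q^{-1}+c$ for a constant $c$, which unwinds via $y=F_p(x)$ to $F_p(x)=F_q(x-c)$ and, upon differentiation, to $p(x)=q(x-c)$, i.e. $p(x+c)=q(x)$. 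The converse direction (translates have vanishing distance) follows by reversing this computation.

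The substantive part is property (v). My plan is to differentiate $\Psi$ at $p$ along $\sigma\in T_p\mathcal{P}(\Omega)$ and expand
\[
\mathrm{Dist}_{\mathrm{H}}(p,p+\sigma)^2=\|\Psi(p+\sigma)-\Psi(p)\|_{L^2}^2=\|D\Psi(p)[\sigma]\|_{L^2}^2+o(\|\sigma\|_{L^2}^2),
\]
using the pointwise first-order expansion $h(\nabla_yF_{p+\sigma}^{-1})-h(\nabla_yF_p^{-1})=h'(\nabla_yF_p^{-1})\,\partial_\epsilon\big|_{\epsilon=0}\nabla_yF_{p+\epsilon\sigma}^{-1}+o$. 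It then remains to identify $\|D\Psi(p)[\sigma]\|_{L^2}^2$ with $\mathrm{Hess}_{\mathrm{T}}\mathcal{F}(p)(\sigma,\sigma)$. For this I would reuse the change-of-variables chain already carried out in the proof of the Theorem, equations \eqref{variation1}--\eqref{variation}: taking the linear path $p_t=p+t\sigma$, the same transformations that reduced the action to $\int_0^1\!\int_0^1 |\partial_th(\nabla_yF_{p_t}^{-1})|^2\,dy\,dt$ establish, integrand-wise in $t$, that $\mathrm{Hess}_{\mathrm{T}}\mathcal{F}(p_t)(\partial_tp_t,\partial_tp_t)=\int_0^1|\partial_th(\nabla_yF_{p_t}^{-1}(y))|^2\,dy$. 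Evaluated at $t=0$, where $\partial_tp_t=\sigma$ and $\partial_th(\nabla_yF_{p_t}^{-1})=D\Psi(p)[\sigma]$, this reads $\mathrm{Hess}_{\mathrm{T}}\mathcal{F}(p)(\sigma,\sigma)=\|D\Psi(p)[\sigma]\|_{L^2}^2$ and closes the argument. In other words, $\Psi$ is an isometry from $(\mathcal{P}(\Omega),\mathrm{Hess}_{\mathrm{T}}\mathcal{F})$ onto a flat piece of $L^2([0,1])$, and (v) is the general principle that the squared distance of a flat metric recovers the metric tensor to second order.

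The main obstacle I anticipate is the rigorous control of the remainder in (v): showing that $p\mapsto \nabla_yF_p^{-1}$ is differentiable in the relevant sense and that the error terms in the pointwise Taylor expansion of $h$ integrate to a genuine $o(\|\sigma\|_{L^2}^2)$, uniformly enough in $y$. This requires quantitative bounds on $h'$ and $h''$ along the curve $\epsilon\mapsto\nabla_yF_{p+\epsilon\sigma}^{-1}(y)$ and on the sensitivity of the inverse CDF to perturbations of the density. The equality case in (i) likewise relies tacitly on $f''>0$, which I would state explicitly as a standing hypothesis.
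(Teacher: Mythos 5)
Your proposal is correct and follows essentially the same route as the paper: the paper likewise dismisses (ii), (iii), and (v) as immediate from the construction (i.e., from the $L^2$ embedding given by formulation (i) of the Theorem, which your map $\Psi$ simply makes explicit, with (v) inherited from the flat-space computation in equations \eqref{variation1}--\eqref{variation}), and proves the equality case of (i) via monotonicity of $h$. The only cosmetic difference is that the paper runs the equality-case argument in the mapping formulation ($h(\nabla_xT(x))=h(1)=0$, hence $\nabla_xT=1$ and $T(x)=x+c$), whereas you run it equivalently through the inverse CDFs; your explicit flagging of the standing hypothesis $f''>0$ for strict monotonicity of $h$ is a point the paper leaves tacit.
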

\begin{proof}
From the construction of transport Hessian distance, the above properties are satisfied. Here we only need to show (i). $\mathrm{Dist}_{\mathrm{H}}(p,q)=0$ implies that 
$\|h(\nabla_xT(x))-h(1)\|=\|h(\nabla_xT(x))\|=0$ under the support of density $q$.  Notice that $h$ is a monotone function. Thus $\nabla_xT(x)=1$. Hence $T(x)=x+c$, for some constant $c\in\mathbb{R}$. From the fact that $T_\#q =p$, we derive $p(T(x))\nabla_xT(x)=q(x)$. This implies $p(x+c)=q(x)$. 
\end{proof}
\section{Closed-form distances}\label{sec4}
In this section, we provide several closed-form examples of transport information Hessian distances.  
From now on, we always denote 
\begin{equation*}
y=F_q(x)\quad\textrm{and}\quad T(x)=F_p^{-1}(F_q(x)).
\end{equation*}
\begin{example}[Boltzmann-Shanon entropy]
Let $f(p)=p\log p$, i.e.
\begin{equation*}
\mathcal{F}(p)=-\mathcal{H}(p)=\int_\Omega p(x)\log p(x) dx.
\end{equation*}
Then $h(y)=\log y$. Hence
\begin{equation}\label{TH}
\begin{split}
\mathrm{Dist}_{\mathrm{H}}(p,q)=&\sqrt{\int_\Omega\|\log\nabla_xT(x)\|^2q(x)dx}\\
=&\sqrt{\int_0^1\|\log\nabla_y F^{-1}_p(y)-\log\nabla_y F^{-1}_q(y)\|^2dy}.
\end{split}
\end{equation}
\end{example}
\begin{remark}[Comparisons with Hellinger distances]
We compare the Hessian distance of Boltzmann-Shannon entropy defined in either $L^2$ space or Wasserstein space. In $L^2$ space, this Hessian distance is known as the Hellinger distance, where
$\mathrm{Hellinger}(p,q)=\sqrt{\int_\Omega \|\sqrt{p(x)}-\sqrt{q(x)}\|^2dx}$. Here distance \eqref{TH} is an analog of the Hellinger distance in Wasserstein space. 
\end{remark}
\begin{example}[Quadratic entropy]
Let $f(p)=\frac{1}{2}p^2$, then $h(y)=-2(y^{-\frac{1}{2}}-1)$. Hence
\begin{equation*}
\begin{split}
\mathrm{Dist}_{\mathrm{H}}(p,q)=&2\sqrt{\int_\Omega\| (\nabla_x T(x))^{-\frac{1}{2}}-1 \|^2q(x)^2dx}\\
=&2\sqrt{\int_0^1\|(\nabla_y F_p^{-1}(y))^{-\frac{1}{2}}-(\nabla_y F^{-1}_q(y))^{-\frac{1}{2}}\|^2dy}.
\end{split}
\end{equation*}
\end{example}

\begin{example}[Cross entropy]
Let $f(p)=-\log p$, then $h(y)=2(y^{\frac{1}{2}}-1)$. Hence 
\begin{equation*}
\begin{split}
\mathrm{Dist}_{\mathrm{H}}(p,q)=&2\sqrt{\int_\Omega\|(\nabla_xT(x))^{\frac{1}{2}}-1\|^2dx}\\
=&2\sqrt{\int_0^1\|(\nabla_yF^{-1}_p(y))^{\frac{1}{2}}-(\nabla_yF_q^{-1}(y))^{\frac{1}{2}}\|^2dy}.
\end{split}
\end{equation*}

\end{example}

\begin{example}
Let $f(p)=\frac{1}{2p}$, then $h(y)=y-1$. Hence 
\begin{equation*}
\begin{split}
\mathrm{Dist}_{\mathrm{H}}(p,q)=&\sqrt{\int_\Omega\|\nabla_xT(x)-1\|^2q(x)^{-1}dx}\\
=&\sqrt{\int_0^1\|\nabla_yF^{-1}_p(y)-\nabla_yF_q^{-1}(y)\|^2dy}.
\end{split}
\end{equation*}

\end{example}

\begin{example}[$\gamma$-entropy]
Let $f(p)=\frac{1}{(1-\gamma)(2-\gamma)}p^{2-\gamma}$, $\gamma\neq 1$, $2$, then $h(y)=\frac{2}{\gamma-1}(y^{\frac{\gamma-1}{2}}-1)$. Hence 
\begin{equation*}
\begin{split}
\mathrm{Dist}_{\mathrm{H}}(p,q)=&\frac{2}{|\gamma-1|}\sqrt{\int_\Omega\|(\nabla_x T(x))^{\frac{\gamma-1}{2}}-1\|^2q(x)^{2-\gamma}dx}\\
=&\frac{2}{|\gamma-1|}\sqrt{\int_0^1\|(\nabla_yF_p^{-1}(y))^{\frac{\gamma-1}{2}}-(\nabla_yF^{-1}_q(y))^{\frac{\gamma-1}{2}}\|^2dy}.
\end{split}
\end{equation*}
\end{example}

\noindent{\textbf{Acknowledgement}: W. Li is supported by a start-up funding in University of South Carolina.}

\section*{Appendix}
For the completeness of this paper, we present the derivation of Hessian operator of $f$-entropy in Wasserstein space below. 
\begin{proposition}[Formula 15.7 in \cite{Villani2009_optimal}] Denote $\mathcal{F}(p)=\int_\Omega f(p(x))dx$ and $\Omega\subset\mathbb{R}^1$. Then 
\begin{equation*}
\mathrm{Hess}_{\mathrm{T}}\mathcal{F}(p)(\sigma, \sigma)=\int_\Omega \|\nabla_x^2\Phi(x)\|^2 f''(p(x))p(x)^2dx,
\end{equation*}
where $\sigma(x)=-\nabla\cdot(p(x)\nabla_x\Phi(x))$.
\end{proposition}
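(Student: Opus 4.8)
The plan is to identify the Wasserstein Hessian with the second time derivative of $\mathcal{F}$ along a geodesic issuing from $p$ with initial velocity $\sigma$, and then to compute that derivative in Lagrangian coordinates, where it collapses to an elementary pointwise calculation. Recall that a constant-speed geodesic in $(\mathcal{P}(\Omega), g_{\mathrm{T}})$ is a displacement interpolation $p(t,\cdot)=(T_t)_\# p$ with $T_t(x)=x+t\nabla_x\Phi(x)$, where $\nabla_x\Phi$ is the initial velocity potential. Differentiating the pushforward relation at $t=0$ and comparing with the continuity equation shows $\partial_t p|_{t=0}=-\nabla_x\cdot(p\nabla_x\Phi)=\sigma$, which is exactly the elliptic equation \eqref{ee} linking $\sigma$ and $\Phi$. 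Because such a path is a geodesic, its acceleration vanishes, so the intrinsic Riemannian Hessian equals the plain second derivative: $\mathrm{Hess}_{\mathrm{T}}\mathcal{F}(p)(\sigma,\sigma)=\frac{d^2}{dt^2}\big|_{t=0}\mathcal{F}(p(t,\cdot))$.

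First I would pass to Lagrangian coordinates. Writing $J(t,x)=\nabla_x T_t(x)=1+t\,\nabla_x^2\Phi(x)$ for the one-dimensional Jacobian, the pushforward identity $p(t,T_t(x))\,J(t,x)=p(x)$ gives $p(t,T_t(x))=p(x)/J(t,x)$, and the change of variables $y=T_t(x)$ turns the functional into
\begin{equation*}
\mathcal{F}(p(t,\cdot))=\int_\Omega f\!\Big(\frac{p(x)}{J(t,x)}\Big)\,J(t,x)\,dx.
\end{equation*}
The decisive structural observation is that $J$ is \emph{affine} in $t$: the quantity $\partial_t J=\nabla_x^2\Phi$ is independent of $t$ and $\partial_{tt}J=0$. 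Hence all $t$-dependence enters through $J$ alone, and the second derivative reduces to a single chain-rule term.

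Next I would carry out the pointwise derivative. Setting $\rho=p(x)$ and $g(J)=J\,f(\rho/J)$, a direct computation gives $g'(J)=f(\rho/J)-\tfrac{\rho}{J}f'(\rho/J)$, and in the second derivative the $f'$ contributions cancel, leaving the clean expression $g''(J)=\frac{\rho^2}{J^3}f''(\rho/J)$. Since $\partial_{tt}J=0$, the chain rule yields $\partial_{tt}\,g(J)=g''(J)\,(\partial_t J)^2=\frac{\rho^2}{J^3}f''(\rho/J)\,\|\nabla_x^2\Phi\|^2$. Integrating in $x$ and evaluating at $t=0$, where $J=1$ and $\rho/J=p$, produces
\begin{equation*}
\mathrm{Hess}_{\mathrm{T}}\mathcal{F}(p)(\sigma,\sigma)=\int_\Omega \|\nabla_x^2\Phi(x)\|^2\,f''(p(x))\,p(x)^2\,dx,
\end{equation*}
which is the claimed formula.

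The main obstacle is conceptual rather than computational: justifying that the ordinary second time derivative along the displacement interpolation equals the intrinsic Hessian, i.e. that the connection (acceleration) term in $\mathrm{Hess}_{\mathrm{T}}\mathcal{F}(\sigma,\sigma)=\ddot{\mathcal{F}}-g_{\mathrm{T}}(\mathrm{grad}_{\mathrm{T}}\mathcal{F},\ddot p)$ drops out. This rests on the fact that displacement interpolations are geodesics of $g_{\mathrm{T}}$ (equivalently, that the velocity potentials solve the Hamilton--Jacobi equation $\partial_t\Phi+\tfrac12\|\nabla_x\Phi\|^2=0$), which is precisely what keeps the Jacobian $J$ affine in $t$. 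A secondary point is ensuring the change of variables and the identification $\partial_t p|_{t=0}=\sigma$ are legitimate under the Neumann or periodic boundary conditions assumed on $\Omega$, which guarantee that $T_t$ remains an orientation-preserving self-map for small $t$ and that no boundary terms appear when relating $\sigma$ to $\Phi$.
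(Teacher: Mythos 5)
Your proof is correct, but it takes a genuinely different route from the paper's. The paper works entirely in Eulerian coordinates: it introduces the pressure function $k(p)=f'(p)p-f(p)$, differentiates $\mathcal{F}(p(t,\cdot))$ twice along the geodesic using the continuity and Hamilton--Jacobi equations, and then combines two integrations by parts with Bochner's identity $\tfrac{1}{2}\Delta_x\|\nabla_x\Phi\|^2-(\nabla_x\Delta_x\Phi,\nabla_x\Phi)=\|\nabla_x^2\Phi\|^2$ to make $\|\nabla_x^2\Phi\|^2$ appear. You instead compute in Lagrangian coordinates along the displacement interpolation $T_t(x)=x+t\nabla_x\Phi(x)$: after the change of variables everything collapses to the pointwise function $g(J)=Jf(\rho/J)$, and the one-dimensional fact that $J(t,x)=1+t\nabla_x^2\Phi(x)$ is \emph{affine} in $t$ (so $\partial_{tt}J=0$) reduces the second derivative to $g''(1)(\partial_tJ)^2=f''(p)p^2\|\nabla_x^2\Phi\|^2$; this is McCann's classical displacement-convexity computation, and your algebra ($g'(J)=f(\rho/J)-\tfrac{\rho}{J}f'(\rho/J)$, $g''(J)=\tfrac{\rho^2}{J^3}f''(\rho/J)$) checks out. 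The two computations are secretly linked, since your $g'(J)$ is exactly $-k$ evaluated along the flow. Your route is shorter and more elementary in this setting: no Bochner formula, no integration-by-parts bookkeeping in the second derivative, and it makes displacement convexity (convexity of $g$ in $J$) transparent. The paper's Eulerian route buys generality: in dimension $n\geq 2$ the Jacobian $\det(I+t\nabla_x^2\Phi)$ is no longer affine in $t$, so your key simplification fails and extra terms appear (cf.\ Formula 15.7 in \cite{Villani2009_optimal}, which in general separates $(\Delta_x\Phi)^2$ from $\|\nabla_x^2\Phi\|^2$ contributions, the two coalescing only in one dimension), whereas the Eulerian/Bochner computation extends directly to $\mathbb{R}^n$ and to manifolds, where it produces the Ricci curvature terms. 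Both proofs rest on the same identification $\mathrm{Hess}_{\mathrm{T}}\mathcal{F}(p)(\sigma,\sigma)=\frac{d^2}{dt^2}\mathcal{F}(p(t,\cdot))\big|_{t=0}$ along geodesics, which the paper asserts and you correctly justify by the vanishing of geodesic acceleration; your attention to the small-$t$ validity of $T_t$ and the boundary conditions is appropriate and harmless here, since only the derivative at $t=0$ is needed. So: correct, complete in the stated one-dimensional setting, and a legitimately different (and arguably cleaner) argument than the paper's, with the caveat that it does not generalize beyond $\Omega\subset\mathbb{R}^1$ as painlessly.
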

\begin{proof}
Notice that the geodesics in Wasserstein space satisfies  
\begin{equation*}
\left\{\begin{split}
&\partial_t p(t,x)+\nabla_x\cdot(p(t,x)\nabla_x \Phi(t,x))=0\\
&\partial_t \Phi(t,x)+\frac{1}{2}\|\nabla_x\Phi(t,x)\|^2=0,
\end{split}\right.
\end{equation*}
where $p(0,x)=p(x)$ and $\partial_tp(0,x)=\sigma(x)=-\nabla_x\cdot(p(x)\nabla_x\Phi(x))$.  
We only need to show that $\mathrm{Hess}_{\mathrm{T}}\mathcal{F}(p)(\sigma, \sigma)=\frac{d^2}{dt^2}\mathcal{F}(p(t,\cdot))|_{t=0}$. The proof follows by a direct calculation. Denote $k(p)=f'(p)p-f(p)$. Then the first order derivative of $\mathcal{F}$ w.r.t. $t$ forms 
\begin{equation*}
\begin{split}
\frac{d}{dt}\mathcal{F}(p(t,\cdot))=&-\int_\Omega \nabla_x\cdot(p(t,x)\nabla_x\Phi(t,x))f'(p(t,x))dx\\
=&\int_\Omega \nabla_x\Phi(t,x)\nabla_xf'(p(t,x))p(t,x)dx\\
=&\int_\Omega \nabla_x\Phi(t,x) \nabla_x k(p(t,x))dx\\
=&-\int_\Omega \Delta_x\Phi(t,x) k(p(t,x))dx,
\end{split}
\end{equation*}
where the first and last equality use the integration by parts and the second equality apply the fact $k'(p)=f''(p)p$. And the second order derivative of $\mathcal{F}$ w.r.t. $t$ satisfies 
\begin{equation*}
\begin{split}
&\frac{d^2}{dt^2}\mathcal{F}(p(t,\cdot))|_{t=0}\\
=&-\int_\Omega \Big\{\Delta_x\partial_t\Phi(t,x) k(p(t,x))+\Delta_x\Phi(t,x) k'(p(t,x))\partial_t p(t,x)\Big\}dx|_{t=0}\\
=&\int_\Omega \Big\{\frac{1}{2}\Delta_x\|\nabla_x\Phi(x)\|^2 k(p(x))+\Delta_x\Phi(x)k'(p(x))\nabla_x\cdot(p(x)\nabla_x\Phi(x))\Big\}dx\\
=&\int_\Omega \Big\{\frac{1}{2}\Delta_x\|\nabla_x\Phi\|^2 k(p(x))+\Delta_x\Phi(x) k'(p(x))(\nabla_xp(x), \nabla_x\Phi(x))+(\Delta\Phi(x))^2k'(p(x))p(x)\Big\}dx\\
=&\int_\Omega \Big\{\frac{1}{2}\Delta_x\|\nabla_x\Phi\|^2 k(p(x))+\Delta_x\Phi(x)(\nabla_xk(p(x)), \nabla_x\Phi(x))+(\Delta\Phi(x))^2k'(p(x))p(x)\Big\}dx.
\end{split}
\end{equation*}
We notice that 
\begin{equation*}
\begin{split}
\int_\Omega \Delta_x\Phi(x)(\nabla_xk(p(x)), \nabla_x\Phi(x)) dx=&-\int_\Omega \nabla_x\cdot(\nabla_x\Phi(x)\Delta_x\Phi(x)) k(p(x))dx\\
=&-\int_\Omega \Big[(\nabla_x\Delta_x\Phi(x), \nabla_x\Phi(x))+(\Delta_x\Phi(x))^2\Big]k(p(x))dx.
\end{split}
\end{equation*}
Combining the above two formulas and using a Bochner's formula  
\begin{equation*}
\frac{1}{2}\Delta_x\|\nabla_x\Phi(x)\|^2-\nabla_x\Delta_x\Phi(x)\nabla_x\Phi(x)=\|\nabla_x^2\Phi(x)\|^2,
\end{equation*}
we finish the proof. 
\end{proof}
\end{document}